\newtheorem{theorem}{Theorem}[section]
\newtheorem{lemma}[theorem]{Lemma}
\theoremstyle{definition}
\newtheorem{remark}[theorem]{Remark}
\numberwithin{equation}{section}
\begin{document}

\title{A sharp constant for the Bergman  projection}

\author{Marijan Markovi\'{c}}

\address{
Faculty of Natural Sciences and Mathematics\endgraf
University of Montenegro\endgraf
Cetinjski put b.b.\endgraf
81000 Podgorica\endgraf
Montenegro}

\email{marijanmmarkovic@gmail.com}

\subjclass[2010]{Primary 45P05; Secondary 	47B35}

\keywords{Bergman projections, Besov spaces}

\begin{abstract}
For the Bergman projection operator $P$  we prove that
\begin{equation*}
\|P\|_{{L^1(B,d\lambda)\rightarrow B_1}}= \frac {\left(2n+1\right)!}{n!}.
\end{equation*}
Here     $\lambda$ stands for the invariant     metric in the unit ball $B$ of
$\mathbf{C}^n$,        and  $B_1$  denotes    the Besov space with an adequate
semi--norm. We also consider a generalization of this result. This generalizes
some  recent  results due to Per\"{a}l\"{a}.
\end{abstract}

\maketitle

\section{Introduction and the main result}

This paper  deals with the  Bergman projection operator $P_\sigma$, which is
an  integral operator with the kernel
\begin{equation*}
K_\sigma(z,w)
=  \frac {\left(1-|w|^2\right)^\sigma }
{\left(1-\left<z,w\right>\right)^{n+1+\sigma}},
\end{equation*}
i.e.,
\begin{equation*}
P_\sigma f (z) \,    = \int_B\,  K_\sigma (z,w)\, f(w)\, dv(w), \quad z\in B
\end{equation*}
(for suitable  $f$). Here $dv$ stands for the volume measure in  $\mathbf{C}
^n$ normalized  in the unit ball $B$.      The  parameter $\sigma$ is a real
number. The symbol    $\left<z,w\right>$   is the standard  inner product in
$\mathbf{C}^n$. Furthermore,  $|z|=\sqrt{\left<z,z\right>}$ is the   induced
norm in  $\mathbf{C}^n$.

We will not normalize the operator $P_\sigma$  in the sense that $P_\sigma 1
= 1$, since  our aim  is to consider  not         only  the case $\sigma>-1$.

Let $d\lambda$  stand  for the invariant metric in the unit ball, i.e.,  let
\begin{equation*}
d\lambda (z) = \frac {dv(z)} {\left(1-|z|^2\right)^{n+1}}.
\end{equation*}

Under a reasonable set of assumptions,          the Besov space $B_1$ may be
introduced as the smallest Moebius     invariant Banach space. In   the same
scale, the Bloch space is   the maximal Moebius invariant     space;     one
often writes $B_\infty$ for that space.  For this and  related results    we
refer to the Zhu  book~\cite{ZHU.BOOK}.  In particular, see Theorems 6.8 and
6.10 there. This          reference contains  also  all relevant information
concerning   the Bergman projection operator we need in this paper.

The Besov space           $B_1$ may be alternatively defined in terms of the
semi--norm.    We will consider     the following   semi--norm on $B_1$. For
$f\in B_1$ let
\begin{equation*}
\| f \|_{B_1} \  =
\sum_{|\alpha| = n+1} \int _ B  \left|\frac {\partial^{n+1} f(z)}
{\partial^\alpha  z} \right| dv(z)
\end{equation*}
(the summation is over all                                         $\alpha =
 (\alpha_1,\dots,\alpha_n)\in\mathbf{Z}^{n}_+$     which satisfy $|\alpha| =
\alpha_1 +\dots + \alpha_n =n+1$;  $\mathbf{Z}_+$  is the set of         all
non--negative integers).

In this paper  we  the find  the  semi--norm   $\|P_\sigma\|_{L^1(B,d\lambda)
\rightarrow  B_1}$  for $\sigma>-\left(n+1\right)$,    i.e.,    the smallest
constant $C$  such that
\begin{equation*}
\|P_\sigma f\|_{B_1}\le C\|f \|_{L^1(B,d\lambda)},\quad f\in B_1.
\end{equation*}
The  following is our main result here.

\begin{theorem}\label{TH.MAIN}
The operator $P_\sigma$ maps continuously the space  $L^1(B,d\lambda)$ onto
the Besov space $B_1$ if and    only if $\sigma>  - \left(n + 1\right)$. In
this case  we have
\begin{equation*}
\|P_\sigma \|_{ L^1(B,\lambda) \rightarrow B_1}
 = \frac{ n!\, \Gamma(n+1+\mu)}{\Gamma^2((n+ 1 + \mu)/ 2)},
\end{equation*}
where $\mu=n+1+\sigma$.
\end{theorem}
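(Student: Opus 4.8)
The plan is to identify $\|P_\sigma\|_{L^1(B,d\lambda)\to B_1}$ with the supremum over $w\in B$ of the $B_1$--norm of the ``column'' $z\mapsto K_\sigma(z,w)$, suitably weighted. Since the domain is an $L^1$--space, for a kernel operator into a Banach space the norm equals the supremum of the norms of the columns. Concretely, writing $dv(w)=(1-|w|^2)^{n+1}\,d\lambda(w)$ and applying the triangle inequality under the integral sign gives
\begin{equation*}
\|P_\sigma f\|_{B_1}\le\int_B\Big[(1-|w|^2)^{n+1}\,\|K_\sigma(\cdot,w)\|_{B_1}\Big]\,|f(w)|\,d\lambda(w)\le\Big(\sup_{w\in B}\Phi(w)\Big)\,\|f\|_{L^1(B,d\lambda)},
\end{equation*}
where $\Phi(w)=(1-|w|^2)^{n+1}\|K_\sigma(\cdot,w)\|_{B_1}$. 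The reverse inequality I would obtain by testing $P_\sigma$ on an $L^1(d\lambda)$--normalized approximate identity $f_\varepsilon$ concentrating at a point $w_0$; then $P_\sigma f_\varepsilon\to(1-|w_0|^2)^{n+1}K_\sigma(\cdot,w_0)$ locally uniformly with all derivatives, and Fatou's lemma yields $\liminf_\varepsilon\|P_\sigma f_\varepsilon\|_{B_1}\ge\Phi(w_0)$. Hence the operator norm equals $\sup_w\Phi(w)$, and the whole problem reduces to computing and maximizing $\Phi$.

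Next I would compute $\Phi$ explicitly. Because $\langle z,w\rangle$ is linear in $z$, each holomorphic derivative simply pulls out a factor: with $\mu=n+1+\sigma$ one gets
\begin{equation*}
\frac{\partial^{\,n+1}}{\partial z^\alpha}\,(1-\langle z,w\rangle)^{-\mu}=\frac{\Gamma(\mu+n+1)}{\Gamma(\mu)}\,\overline{w}^{\,\alpha}\,(1-\langle z,w\rangle)^{-(\mu+n+1)},\qquad|\alpha|=n+1.
\end{equation*}
Taking moduli, summing over $\alpha$ and integrating in $z$ factors $\Phi$ into a radial and a directional piece,
\begin{equation*}
\Phi(w)=\frac{\Gamma(\mu+n+1)}{\Gamma(\mu)}\,(1-|w|^2)^{\mu}\Big(\sum_{|\alpha|=n+1}|w^\alpha|\Big)\int_B\frac{dv(z)}{|1-\langle z,w\rangle|^{\,n+1+\mu}},
\end{equation*}
using $(1-|w|^2)^{n+1}(1-|w|^2)^\sigma=(1-|w|^2)^\mu$; the last integral depends only on $|w|$.

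The engine producing the Gamma--function constant is the evaluation of this integral. Expanding $|1-\langle z,w\rangle|^{-(n+1+\mu)}$ in a power series and using $\int_B|z_1|^{2k}\,dv=k!\,n!/(n+k)!$ (after rotating $w$ to a coordinate axis), the integral equals ${}_2F_1\!\big(\tfrac{n+1+\mu}{2},\tfrac{n+1+\mu}{2};n+1;|w|^2\big)$. Since here $c-a-b=(n+1)-(n+1+\mu)=-\mu$, the hypothesis $\mu>0$ is exactly what makes the classical $x\to1^-$ asymptotic applicable, giving
\begin{equation*}
\int_B\frac{dv(z)}{|1-\langle z,w\rangle|^{\,n+1+\mu}}\sim\frac{n!\,\Gamma(\mu)}{\Gamma^2\!\big((n+1+\mu)/2\big)}\,(1-|w|^2)^{-\mu},\qquad|w|\to1^-.
\end{equation*}
Thus $(1-|w|^2)^\mu$ times the integral tends to $n!\,\Gamma(\mu)/\Gamma^2((n+1+\mu)/2)$, and multiplying by the prefactor $\Gamma(\mu+n+1)/\Gamma(\mu)$ produces precisely $n!\,\Gamma(n+1+\mu)/\Gamma^2((n+1+\mu)/2)$. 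For the ``only if'' direction I would note that for $\mu<0$ the same integral stays bounded while $(1-|w|^2)^\mu$ blows up, so $\Phi$ is unbounded, and for $\mu=0$ the kernel is independent of $z$ so the range is trivial; surjectivity onto $B_1$ when $\mu>0$ I would quote from the Besov/Bergman theory in~\cite{ZHU.BOOK}.

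The main obstacle is the sharp maximization of $\Phi$, where two things must be controlled simultaneously. The radial factor $(1-|w|^2)^\mu\,|w|^{n+1}\,{}_2F_1(\cdots;|w|^2)$ must be shown to attain its supremum in the boundary limit, which requires monotonicity information about the hypergeometric function on $(0,1)$ rather than merely its endpoint asymptotics; and the directional factor $\sum_{|\alpha|=n+1}|w^\alpha|$, a complete homogeneous symmetric polynomial in $|w_1|,\dots,|w_n|$, must be analysed on the sphere $|w|=\mathrm{const}$. Reconciling the extremal direction with the radial extremum so that the product collapses to the single clean constant above is the delicate point, and I would expect the bulk of the work -- together with any normalization implicit in the semi--norm -- to live precisely here. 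Once $\sup_w\Phi(w)$ is shown to equal the boundary value computed above, the theorem follows.
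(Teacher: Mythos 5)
Your route is genuinely different from the paper's. The paper differentiates under the integral sign, splits $P_\sigma$ into the family $Q_{\sigma,\alpha}$, passes via the duality relation~\eqref{RE.NORM} to the adjoints $Q^*_{\sigma,\alpha}\colon L^\infty(B)\to L^\infty(B)$, and evaluates each adjoint norm using Lemmas~\ref{LE.TRANS} and~\ref{LE.RUDIN}; you instead use the fact that an integral operator out of an $L^1$-space has norm equal to the supremum of the target-space norms of its columns, with an approximate identity plus Fatou for the lower bound. The individual computations you propose are correct: the derivative formula, the hypergeometric representation of $\int_B|1-\langle z,w\rangle|^{-(n+1+\mu)}\,dv(z)$, and the $x\to1^-$ asymptotics reproduce exactly the content of Lemmas~\ref{LE.TRANS} and~\ref{LE.RUDIN}, and your treatment of $\mu\le0$ matches the paper's necessity argument.

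The problem is the step you defer to the end. Your $\Phi$ carries the directional factor $h(w)=\sum_{|\alpha|=n+1}|w^\alpha|$, and you hope its maximization reconciles with the radial limit so that $\sup_w\Phi(w)$ equals the stated constant. It does not, for $n\ge2$: $h$ is the complete homogeneous symmetric polynomial of degree $n+1$ in $|w_1|,\dots,|w_n|$, and on the sphere $|w|=1$ it exceeds its value $1$ at $w=e_1$. For instance, at $n=2$ and $w=(1,1)/\sqrt2$ one gets $h=4\cdot2^{-3/2}=\sqrt2$, and in general $w=(a,b,0,\dots,0)$ with $a^2+b^2=1$ and $b>0$ small gives $h(w)\ge a^{n+1}(1+b/a)>1$. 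Since $J_{-\mu,0}$ is radial and increasing and $h$ is homogeneous, $\sup_w\Phi(w)=M_n\cdot n!\,\Gamma(n+1+\mu)/\Gamma^2((n+1+\mu)/2)$ with $M_n=\max_{|w|=1}h(w)>1$ for $n\ge2$. Because the lower-bound half of your argument (approximate identity plus Fatou) is airtight, this shows the displayed constant is not even an upper bound for the norm when $n\ge2$; your method, carried to completion, contradicts the theorem rather than proving it. The discrepancy traces to the third equality of~\eqref{RE.NORM}: the adjoint of $Q_\sigma$ acts on the $\ell^\infty$-direct sum of $d$ copies of $L^\infty(B)$, and its norm is $\sup\bigl\{\|\sum_\alpha Q^*_{\sigma,\alpha}g_\alpha\|_\infty:\max_\alpha\|g_\alpha\|_\infty\le1\bigr\}$, which is not $\max_\alpha\|Q^*_{\sigma,\alpha}\|$; testing all components with $g_\alpha=(\overline{z^\alpha}/|z^\alpha|)\,g_z$ recovers exactly your factor $h(z)$. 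For $n=1$ there is a single multi-index, $h(w)=|w|^2\le1$, and your argument is complete (modulo routine justification of the column formula) and agrees with Per\"{a}l\"{a}'s value $6$. So the approach is sound and, on the one point where the two proofs differ, more careful than the paper's; but you should not expect the ``delicate point'' to resolve in the theorem's favor for $n\ge2$.
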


\begin{remark}
According  to this result,  for the ordinary  Bergman projection  $P=P_0$ we
have
\begin{equation*}\begin{split}
\| P\|_{L^1(B,d\lambda) \rightarrow B_1}  =  \frac {\left(2n+1\right)!}{n!}.
\end{split}\end{equation*}
In  particular,                         for    $n=1$ we have $\|P\| = 6$, as
Per\"{a}l\"{a}~\cite{PERALA.ARCH}                      has recently   showed.
\end{remark}

\begin{remark}
Results  concerning the semi--norm calculation of the operator    $c_\sigma
P_\sigma$ for $\sigma>-1$, where           $c_\sigma ={\Gamma(n+1+\sigma)}/
{(\Gamma(\sigma+1)\Gamma(n+1))}$   is a normalizing  constant, when  acting
from  the space $L^\infty (B)$ onto  the Bloch space in the unit ball,  may
be   found in~\cite{KALAJ.SCAND, KALAJ.JOT},   and   in the recent author's
preprint~\cite{MARKOVIC.PREPRINT}.   For example, for $n=1$  Per\"{a}l\"{a}
obtained
\begin{equation*}
\| P\|_{ L^\infty (B)\rightarrow B_\infty} = \frac 8\pi.
\end{equation*}
This           is the result from~\cite{PERALA.AASF.2012} which served as a
motivation for  the author  papers.
\end{remark}

\section{Some lemmas}
In order  to prove our main theorem, we need some auxiliary results. These
will be collected    in lemmas which follows.    Some of the facts we will
present in the sequel  may                          be  found in the Rudin
monograph~\cite{RUDIN.BOOK.BALL}.

\subsection{}
It is well known that bi--holomorphic mappings of $B$ onto itself,  up to
unitary transformations,  have the form
\begin{equation*}
\varphi_z(\omega)
= \frac {z-{\left<\omega,z\right>}z/{|z|^2}-(1-|z|^2)^{1/2}
(\omega-{\left<\omega,z\right>}z/{|z|^2})}
{1-\left<\omega,z\right>}
\end{equation*}
for some $z\in B$. For $z=0$ we mean $\varphi _z  = -\mathrm {Id}_{B}$.
The known  identity
\begin{equation}\label{RE.2}
\left|1-\left<z,\omega\right>\right|
\left|1-\left<z,\varphi_z(\omega)\right>\right|=1-\left|z\right|^2
\end{equation}
for                    $z,\, \omega\in B$ will be useful in the following

\begin{lemma}\label{LE.TRANS}
For every $z\in B$  there holds
\begin{equation*}\begin{split}
\int_B\, \frac {\left(1-|z|^2\right)^{a} }
{\left|1-\left<z,w\right>\right|^{n+1+a}}\, dv(w)
=  \int_B\, \frac {1}
 {\left|1-\left<z,\omega\right>\right|^{n+1-a}}\, dv( \omega),
\end{split}\end{equation*}
where $a$  is  any real number.
\end{lemma}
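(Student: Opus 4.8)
The plan is to evaluate the left-hand integral by means of the substitution $w=\varphi_z(\omega)$, where $\varphi_z$ is the (involutive) automorphism of $B$ introduced above. Since $\varphi_z$ maps $B$ bijectively onto itself, the variable $\omega$ again ranges over all of $B$, so the substitution is legitimate. The essential ingredients are the change-of-variables formula together with the real Jacobian of $\varphi_z$, which (see the Rudin monograph~\cite{RUDIN.BOOK.BALL}) is
\begin{equation*}
(J_{\mathbf{R}}\varphi_z)(\omega)
= \left(\frac {1-|z|^2} {\left|1-\left<\omega,z\right>\right|^2}\right)^{n+1},
\end{equation*}
so that $dv(w) = (J_{\mathbf{R}}\varphi_z)(\omega)\, dv(\omega)$. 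Note also that $\left|1-\left<\omega,z\right>\right| = \left|1-\left<z,\omega\right>\right|$, since the two inner products are complex conjugates of one another.

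First I would insert this substitution into the integrand. The factor $\left|1-\left<z,w\right>\right|^{n+1+a}$ becomes $\left|1-\left<z,\varphi_z(\omega)\right>\right|^{n+1+a}$, and here the identity~\eqref{RE.2} is the decisive step: it gives
\begin{equation*}
\left|1-\left<z,\varphi_z(\omega)\right>\right|
= \frac {1-|z|^2} {\left|1-\left<z,\omega\right>\right|}.
\end{equation*}
Substituting this relation together with the Jacobian into the left-hand integral expresses the entire integrand as a product of a power of $(1-|z|^2)$ and a power of $\left|1-\left<z,\omega\right>\right|$.

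The final step is purely arithmetic. Collecting exponents, the powers of $(1-|z|^2)$ sum to $a-(n+1+a)+(n+1)=0$ and thus cancel completely, while the exponent of $\left|1-\left<z,\omega\right>\right|$ becomes $(n+1+a)-2(n+1) = -(n+1-a)$. The integrand therefore reduces to $\left|1-\left<z,\omega\right>\right|^{-(n+1-a)}$, which is exactly the integrand on the right-hand side, finishing the argument. I do not expect a genuine obstacle here: once~\eqref{RE.2} and the Jacobian formula are in place, the manipulation is routine. The only points that deserve a word of care are the bijectivity of $\varphi_z$ on $B$, which justifies the substitution, and the fact that the stated equality should be read as an identity between integrals that converge or diverge simultaneously, so that no separate convergence discussion is required for a fixed real $a$.
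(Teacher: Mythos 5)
Your proposal is correct and follows essentially the same route as the paper: the substitution $w=\varphi_z(\omega)$ with the real Jacobian of $\varphi_z$, combined with the identity~\eqref{RE.2} to eliminate the factor $\left|1-\left<z,\varphi_z(\omega)\right>\right|$, and the exponent bookkeeping matches the paper's computation exactly.
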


\begin{proof}
The       real Jacobian of $\varphi_z(\omega)$ is given by the expression
\begin{equation*}
(J_\mathbf R\varphi_z)(\omega)=
\frac{ \left(1-|z|^2\right)^{n+1}}
{\left|1-\left<z,\omega\right>\right|^{2n+2}}.
\end{equation*}

Denote the integral on  the left side of our lemma  by $J$.   Introducing
the change  of variables $w=\varphi_z(\omega)$ and using the relation for
the pull--back measure, we obtain
\[\begin{split}
J&=\int_B \frac {\left(1-|z|^2\right)^a} {\left|1-\left<z,\varphi_z(\omega)\right>\right|^{n+1+a}}
\, \frac{\left(1-|z|^2\right)^{n+1}} {\left|1-\left<z,\omega\right>\right|^{2n+2}}\, dv(\omega)
\\&=\int_{B} \frac{\left(1-|z|^2\right)^{n+1+a}}
{\left|1-\left<z,\varphi_z(\omega)\right>\right|^{n+1+a} \left|1-\left<z,\omega\right>\right|^{2n+2}}\, dv (\omega)
\\&=\int_{B}\frac{\left(\left|1-\left<z,\omega\right>\right|\left|1-\left<z,\varphi_z(\omega)\right>\right|\right)^{n+1+a}} {\left|1-\left<z,\varphi_z(\omega)\right>\right|^{n+1+a} \left|1-\left<z,\omega\right>\right|^{2n+2}}\, dv (\omega)
\\&=\int_{B}  \frac {1}{\left|1-\left<z,\omega\right>\right|^{n+1-a}}\, dv (\omega).
\end{split}\]
In third equality we have used the identity~\eqref{RE.2}
\end{proof}

\subsection{}
In connection with the next lemma see~\cite[Proposition~1.4.10]{RUDIN.BOOK.BALL}
as  well as~\cite{KALAJ.SCAND}.

For $z\in B,\,  c$ real, and  $t>-1$  define
\begin{equation*}
J_{c,t}(z) =
\int_{B}\, \frac{\left(1-|w|^2\right)^t}
{\left|1-\left<z,w\right>\right|^{n+1+t+c}}\,  dv(w).
\end{equation*}

\begin{lemma}\label{LE.RUDIN}
The  function $J_{c,t}(z)$  is radially symmetric and increasing in $|z|$,
since it  can be represented as
\begin{equation*}
J_{c,t}(z) = \frac{\Gamma(n+1) \Gamma(t+1)}{\Gamma(n + 1+ t)}\,
 {_2}F_1({\lambda, \lambda}, {n +1+ t}, |z|^2),
\end{equation*}
where $\lambda =  (n  + 1 + t + c)/2$, and  $\, {_2}F_1\, $ is the Gauss
hypergeometric function.

The function      $J_{c,t}(z)$ is bounded in $B$  for $c<0$. In this case
$J_{c,t}$                  extends  continuously  on  $\overline{B}$ with
\begin{equation*}
J_{c,t}(e_1)=\frac{\Gamma(n+1)  \Gamma(t+1) \Gamma(-c)}
{\Gamma^2( (n +  1   + t -  c)/2)},
\end{equation*}
where $e_1 = (1,0,\dots,0)\in\mathbf{C}^n$.
\end{lemma}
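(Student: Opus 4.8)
The plan is to use unitary invariance to reduce the problem to a one–dimensional computation, expand the kernel as a power series, integrate term by term, and recognize the outcome as a Gauss hypergeometric function; the boundary value will then follow from the classical summation of ${}_2F_1$ at the point $1$. For the radial symmetry, note that if $U$ is unitary then $\langle Uz,Uw\rangle=\langle z,w\rangle$ and $dv$ is $U$--invariant, so the substitution $w\mapsto Uw$ gives $J_{c,t}(Uz)=J_{c,t}(z)$. Hence $J_{c,t}$ depends only on $|z|$, and I may take $z=re_1$ with $r=|z|$, so that $\langle z,w\rangle=r\overline{w_1}$.

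Writing $2\lambda=n+1+t+c$ and using $|1-\zeta|^{-2\lambda}=(1-\zeta)^{-\lambda}(1-\overline\zeta)^{-\lambda}$ with $\zeta=\langle z,w\rangle$, the binomial series gives
\[
|1-\langle z,w\rangle|^{-(n+1+t+c)}=\sum_{j,k\ge 0}\frac{(\lambda)_j(\lambda)_k}{j!\,k!}\,\langle z,w\rangle^{j}\,\overline{\langle z,w\rangle}^{\,k},
\]
where $(\lambda)_k$ is the Pochhammer symbol. Since $|\langle z,w\rangle|\le|z|<1$ on $B$ and $\int_B(1-|w|^2)^t\,dv$ is finite, this double series converges absolutely and Fubini justifies integrating it term by term. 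Invariance of the inner integral under $w\mapsto e^{i\theta}w$ annihilates every term with $j\ne k$, leaving
\[
J_{c,t}(re_1)=\sum_{k\ge 0}\frac{(\lambda)_k^2}{(k!)^2}\,r^{2k}\int_B|w_1|^{2k}(1-|w|^2)^t\,dv(w).
\]

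Inserting the standard moment integral $\int_B|w_1|^{2k}(1-|w|^2)^t\,dv(w)=\Gamma(n+1)\Gamma(t+1)\,k!/\Gamma(n+1+k+t)$ and using $\Gamma(n+1+k+t)=(n+1+t)_k\,\Gamma(n+1+t)$ turns the last display into
\[
J_{c,t}(re_1)=\frac{\Gamma(n+1)\Gamma(t+1)}{\Gamma(n+1+t)}\sum_{k\ge0}\frac{(\lambda)_k^2}{(n+1+t)_k}\frac{r^{2k}}{k!},
\]
which is exactly the asserted formula with argument $|z|^2$. Every coefficient is nonnegative, because $(\lambda)_k^2\ge0$ and $n+1+t>0$ forces $(n+1+t)_k>0$; hence $J_{c,t}$ is increasing in $|z|^2$, and so in $|z|$.

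For the second assertion I would invoke Gauss's summation theorem: with $a=b=\lambda$ and $\gamma=n+1+t$ one has $\gamma-a-b=-c$, so the hypothesis $\gamma-a-b>0$ is equivalent to $c<0$, and then ${}_2F_1(\lambda,\lambda,n+1+t,1)=\Gamma(n+1+t)\Gamma(-c)/\Gamma^2((n+1+t-c)/2)$. Multiplying by the prefactor yields the claimed value of $J_{c,t}(e_1)$. Since the coefficients are nonnegative, convergence at the endpoint together with Abel's theorem shows that ${}_2F_1(\lambda,\lambda,n+1+t,\cdot)$ is continuous on $[0,1]$; being increasing, $J_{c,t}$ attains its supremum at $r=1$, so it is bounded and extends continuously to $\overline B$. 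The main obstacle is the rigorous term-by-term integration and the control of the boundary behaviour: everything hinges on the condition $c<0$, which is precisely what makes the hypergeometric series converge at $1$ and $J_{c,t}$ bounded.
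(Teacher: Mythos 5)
Your argument is correct. The paper itself offers no proof of this lemma, deferring instead to Rudin's Proposition 1.4.10 and to the Kalaj--Markovi\'{c} paper, and what you have written is a faithful reconstruction of precisely that standard argument: unitary invariance to reduce to $z=re_1$, the binomial expansion of $(1-\zeta)^{-\lambda}(1-\overline{\zeta})^{-\lambda}$, rotation invariance to kill the off-diagonal terms, the moment integral $\int_B|w_1|^{2k}(1-|w|^2)^t\,dv=\Gamma(n+1)\Gamma(t+1)\,k!/\Gamma(n+1+k+t)$, and Gauss's summation formula at $1$ (where $\gamma-a-b=-c>0$ is exactly the hypothesis $c<0$). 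All the identities check out, and the justifications you flag as the delicate points (absolute convergence of the double series for fixed $z\in B$, Abel's theorem for the boundary value) are handled adequately.
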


For the properties of the Gamma function and the Gauss     hypergeometric
functions we refer to~\cite{AAR.BOOK.SPECIAL}.

\section{The proof of the main theorem}
\subsection{} Since     for every  $\alpha\in\mathbf{Z}^{n}_+$ which satisfies
$|\alpha|=n+1$ we have
\begin{equation*}
\frac {\partial^{n+1} P_\sigma f(z)} {\partial^\alpha z}
=  \frac {\Gamma\left(n+1+\mu\right)} {\Gamma\left(\mu\right)}
\int_B \frac {\overline{w}^\alpha \left(1-|w|^2\right)^\sigma}
{\left(1-\left<z,w\right>\right)^{n+1+\mu}}\, f(w)\, dv(w),
\end{equation*}
we should consider the family of  operators
$\left\{Q_{\sigma,\alpha}:|\alpha|=n+1\right\}$  given by
\begin{equation*}
{Q}_{\sigma,\alpha} f(z) =  \frac {\Gamma\left(n+1+\mu\right)} {\Gamma\left(\mu\right)}
\int_B \frac { \overline{w}^\alpha\left(1-|w|^2\right)^\sigma }
{\left(1-\left<z,w\right>\right)^{n+1 +\mu}}\, f(w)\, dv(w).
\end{equation*}
For  the integer $d ={\left(2n\right)!}/({\left(n+1\right)! \left(n-1\right)!})$
(which is the  number of  all $\alpha \in\mathbf{Z}_+^{n}$ satisfying  $|\alpha|
= n+1$)  denote
\begin{equation*}
{Q}_\sigma  = (\overbrace {\dots,{Q}_{\sigma,\alpha},\dots}^d).
\end{equation*}
One  readily sees that
\begin{equation}\label{RE.NORM}\begin{split}
\|P_\sigma \| _{L^1(B,\lambda) \rightarrow B_1} &
= \|{Q}_\sigma\|_{L^1(B,\lambda)\rightarrow\bigotimes_{k=1}^d L^1(B)}
\\&= \|{Q}_\sigma ^* \|_{\bigotimes_{k=1}^d  L^\infty(B) \rightarrow L^\infty(B)}
\\& = \max_{|\alpha|=n+1} \|{Q}^*_{\sigma,\alpha}\|_{L^\infty(B)\rightarrow L^\infty(B)}.
\end{split}\end{equation}
We  will  therefore    find the conjugate operator ${Q}^*_{\sigma,\alpha}:
L^\infty(B)\rightarrow L^\infty(B)$.

The conjugate operator  of  $Q_{\sigma,\alpha}$ is
\begin{equation}\label{CONJUGATE}
{Q}_{\sigma,\alpha}^* g(z) =
\frac {\Gamma(n+1+\mu)} {\Gamma(\mu)}
\int_B  \frac {z^{\alpha}\left(1-|z|^2\right)^{\mu} }{\left(1-\left<z,w\right>\right)^{n+1+\mu}}\, g(w)\, dv(w).
\end{equation}
To see that~\eqref{CONJUGATE} is  true, let $\left<\varphi,\psi\right>_{v}$
stand for  the inner  product in  $L^2(B,dv)$. On the other hand, let
$\left<\varphi,\psi\right>_{ \lambda}$  be the inner product in
$L^2(B,d\lambda)$. Then for $f\in L^1(B,d\lambda)$ and $g\in L^\infty(B)$
we have
\begin{equation*}\begin{split}
&\left<{Q}_{\sigma,\alpha}f,g\right> _{v}=
\\&=  \frac {\Gamma(n+1+\mu)} {\Gamma(\mu)}
\int_B\left\{\int_B  \frac {\overline{w}^\alpha\left(1-|w|\right)^\sigma} {\left(1-\left<z,w\right>\right)^{n+1+\mu}}\, f(w)\, dv(w)\right\}
\overline{g(z)}\, dv(z)
\\& =  \frac {\Gamma(n+1+\mu)} {\Gamma(\mu)}
\int_B \left\{\int_B  \frac {\overline{w}^\alpha  \left(1-|w|\right)^\sigma }{\left(1-\left<z,w\right>\right)^{n+1+\mu}}\, \overline{g(z)}\, dv(z)\right\} f(w)\, dv(w)
\\&=  \frac {\Gamma(n+1+\mu)} {\Gamma(\mu)}
\int_B\left\{\int_B \frac {\overline{w}^\alpha  \left(1-|w|\right)^{\mu} }{\left(1-\left<z,w\right>\right)^{n+1+\mu}}\, \overline{g(z)} \, dv(z)\right\}f(w)\, d\lambda(w)
\\&= \frac {\Gamma(n+1+\mu)} {\Gamma(\mu)}
\int_B f(w)\left\{ \overline { \int_B \frac { {w}^\alpha  \left(1-|w|\right)^{\mu} }
{\left(1-\left<w,z\right>\right)^{n+1+\mu}}\, {g(z)} \, dv(z) }\right\}  d\lambda(w)
\\&= \left<f,{Q}^*_{\sigma,\alpha}g\right>_{ \lambda},
 \end{split}\end{equation*}
where
\begin{equation*}
{Q}_{\sigma,\alpha} ^* g(w)
= \frac {\Gamma(n+1+\mu)} {\Gamma(\mu)}
\int_B \frac { w^{\alpha}\left(1-|w|^2\right)^{\mu} }{\left(1-\left<w,z\right>\right)^{n+1+\mu}}\, g(z)\, dv(z).
\end{equation*}

Let  us now find $\|{Q}^*_{\sigma,\alpha}:L^\infty(B)\rightarrow L^\infty(B)\|$.

For      fixed $z$     (and $\alpha$) the maximum of the integral expression
in~\eqref{CONJUGATE} (regarding $g\in L^\infty(B),\, \|g\|_\infty=1$)     is
attained  for $g_z\in  L^\infty (B)$ given by
\begin{equation*}
g_z(w) =   \frac{\left|1-\left<z,w\right>\right|^{n+1 +\mu}}{(1-\overline{\left<z,w\right>})^{n+1+\mu}}.
\end{equation*}
Note  that  $\|g_z\|_\infty=1$.
Therefore, we have  (for fixed $z$ and $\alpha$)
\begin{equation*}
|{Q}_{\sigma,\alpha}^* g(z)|\le  \frac {\Gamma(n+1+\mu)} {\Gamma(\mu)}
\,  |z|^\alpha  \int_B \frac {\left(1-|z|^2\right)^{\mu} }{\left|1-\left<z,w\right>\right|^{n+1+\mu}}\, dv(w)
\end{equation*}
for  all $g\in L^{\infty}(B),\, \|g\|_\infty \le  1$.

It follows
\begin{equation*}\begin{split}
\|{Q}_{\sigma,\alpha}^*\|_{L^\infty(B)\rightarrow L^\infty(B)} &\le   \frac {\Gamma(n+1+\mu)} {\Gamma(\mu)}
\sup_{z\in B}   \int_B \frac {\left(1-|z|^2\right)^{\mu} }{\left|1-\left<z,w\right>\right|^{n+1+\mu}}\,  dv(w)
\end{split}\end{equation*}
for all $\alpha\in \mathbf{Z}^{n}_+,\,   |\alpha| = n+1$.

We transform  the last integral as
\begin{equation*}\begin{split}
\int_B \frac {\left(1-|z|^2\right)^{\mu} }{\left|1-\left<z,w\right>\right|^{n+1+\mu}}\,  dv(w)
&=  \int_B \frac {dv( \omega)} {\left|1-\left<z,\omega\right>\right|^{n+1-\mu}} \\&= J_{-\mu,0}(z)
\end{split}\end{equation*}
(see Lemma~\ref{LE.TRANS}).

Regarding the first part of Lemma~\ref{LE.RUDIN} the number $\sup_{z\in B} J_{-\mu,0} (z)$
is finite if    $-\mu<0$ i.e.,  $\sigma > - \left(n+1\right)$.  In this case, by the second
part of this lemma,  we can write
\begin{equation*}
\sup_{z\in B} J_{-\mu,0} (z)
 =  \frac{\Gamma(n+1) \Gamma (\mu)}{\Gamma^2((n+1+{\mu})/2)}.
\end{equation*}
Therefore, we have
\begin{equation*}\begin{split}
\| {Q}_{\sigma,\alpha}^*\|_{L^\infty(B)\rightarrow L^\infty(B)}
&\le \frac {\Gamma(n+1+\mu)} {\Gamma(\mu)} \frac{\Gamma(n+1)  \Gamma (\mu)}{\Gamma^2((n+1 + {\mu})/2)}
\\& = \frac{n!\,\Gamma(n+1+\mu)}{\Gamma^2 ((n+ 1 + {\mu})/2)}.
\end{split}\end{equation*}

Regarding now the relation~\eqref{RE.NORM} for $\sigma>- \left(n+1\right)$ we
obtain
\begin{equation*}
\|P_\sigma \|_{ L^1(B,\lambda) \rightarrow B_1}\le
\frac{n!\, \Gamma(n+1+\mu)}{\Gamma^2( (n+ 1+{\mu})/2)},
\end{equation*}
what gives one part of our theorem.

\subsection{}
We will now prove the reverse inequality as well as that the condition
$\sigma>-\left(n+1\right)$               is  necessary for boundedness
of $P_\sigma$ on $L^1(B,d\lambda)$.                    We also use the
relation~\eqref{RE.NORM}.

For $\varepsilon\in \left(0,1\right)$ denote
\begin{equation*}
g_\varepsilon (w) =
\frac{\left|1-\left<\varepsilon e_1,w\right>\right|^{n+1+\mu}}
{(1-\overline{\left<\varepsilon e_1,w\right>})^{n+1+\mu}}.
\end{equation*}
Then $g_\varepsilon\in L^\infty(B),\, \|g_\varepsilon\|_\infty=1$ and
for  any $|\alpha|=n+1$ we have
\begin{equation*}\begin{split}
{Q}_{\sigma,\alpha}^* g_\varepsilon(\varepsilon e_1)&
= \frac {\Gamma(n+1+\mu)} {\Gamma(\mu)}\, \varepsilon^{n+1}
\int_B \frac{\left(1-\varepsilon^2\right)^{\mu}}{\left|1-\left<\varepsilon e_1,w\right>\right|^{n+1+\mu}}\,  dv(w)
\\&= \frac {\Gamma(n+1+\mu)} {\Gamma(\mu)}\, \varepsilon ^{n+1} \int_B \frac{dv(w)}{\left|1-\left<\varepsilon e_1,w\right>\right|^{n+1-\mu}}
\\&= \frac {\Gamma(n+1+\mu)} {\Gamma(\mu)}\,  \varepsilon ^{n+1}   \,  J_{-\mu,0} (\varepsilon e_1).
\end{split}\end{equation*}
It follows
\begin{equation*}\begin{split}
\|{Q}_{\sigma,\alpha}^* \|_{L^\infty(B)\rightarrow L^\infty(B)}&\ge
\sup_{z\in B} | {Q}_{\sigma,\alpha}^* g_\varepsilon(z)|
\\&\ge\limsup_{\varepsilon\rightarrow 1} |{Q}_\sigma^* g_\varepsilon(\varepsilon e_1)|
\\&= \frac {\Gamma(n+1+\mu)} {\Gamma(\mu)}    \lim_{\varepsilon\rightarrow 1} J_{-\mu,0}(\varepsilon e_1)
\\&= \frac {\Gamma(n+1+\mu)} {\Gamma(\mu)}  \frac{\Gamma(n+1)  \Gamma (\mu)}{\Gamma^2((n+ 1+\mu)/2)}
\\& = \frac{ n!\, \Gamma(n+1+\mu)}{\Gamma^2((n+ 1+{\mu})/2)},
\end{split}\end{equation*}
only for $\sigma>- \left(n+1\right)$.

Thus, in view of~\eqref{RE.NORM} we have
\begin{equation*}
\|P_\sigma \|_{ L^1(B,\lambda) \rightarrow B_1} \ge
\frac {n!\, \Gamma(n+1+\mu)}{\Gamma^2((n+1+{\mu})/2)}
\end{equation*}
for $\sigma>- \left(n+1\right)$.


\begin{thebibliography}{10}

\bibitem{AAR.BOOK.SPECIAL}
G. Andrews, R. Askey and R. Roy, {\it Special Functions},
Cambridge University Press, Cambridge, 1999.

\bibitem{KALAJ.SCAND}
D. Kalaj and M. Markovi\'{c}, {\it Norm of the Bergman projection},
Math. Scand. {\bf 115} (2014), 143--160.

\bibitem{KALAJ.JOT}
D. Kalaj and \DJ{}. Vujadinovi\'{c}, {\it Norm of the Bergman projection onto the Bloch space},
to appear in J. Operator Theory.

\bibitem{MARKOVIC.PREPRINT}
M. Markovi\'{c}, {\it Semi--norms of the Bergman projection}, preprint, arXiv:1402.4688.

\bibitem{PERALA.AASF.2012}
{A. Per\"{a}l\"{a}}, {\it On the optimal constant for the Bergman projection onto the Bloch space},
Ann. Acad. Sci. Fenn. {\bf 37} (2012), 245--249.

\bibitem{PERALA.ARCH}
A. Per\"{a}l\"{a}, {\it Sharp constant for the Bergman projection onto the minimal M\"{o}bius invariant space},
Arch. Math. \textbf{102} (2014), 263--270.

\bibitem{RUDIN.BOOK.BALL}
W. Rudin, {\it Function Theory in the Unit Ball of $\mathbf {C}^n$},
Springer--Verlag, New York, 1980.

\bibitem{ZHU.BOOK}
K. Zhu, {\it Spaces of Holomorphic Functions in the Unit Ball}, Springer, 2005.

\end{thebibliography}
\end{document}